\newif\ifllncs
\newif\ifspringer
\newif\ifnonspringer
\providecommand*{\input@path}{}
\g@addto@macro\input@path{{llncs/}{include/}{../include/}}
    \author{
        Aleksandr M. Kazachkov\inst{1}\textsuperscript{(\Letter)}\orcidID{0000-0002-4949-9565} 
        \and 
        Egon Balas\inst{2}%
        \thanks{E.~Balas passed away during the preparation of this manuscript, which started when both authors were at Carnegie Mellon University. The core ideas and early results are documented in the PhD dissertation of \citet[Chapter~5]{Kazachkov18}. A.M.~Kazachkov completed the computational experiments, analysis, and writing independently.}
    }
    \institute{
        University of Florida, Gainesville, FL, USA \\
        \email{akazachkov@ufl.edu}
        \and
        Carnegie Mellon University, Pittsburgh, PA, USA \\
        \email{eb17@andrew.cmu.edu}
    }
	\newenvironment{proof*}[1]
  		{%
  			\begin{proof}}
  			{\end{proof}}
    \journalname{Mathematical Programming}
    \author{%
        Aleksandr M. Kazachkov\inst{1}\textsuperscript{(\Letter)}\orcidID{0000-0002-4949-9565} 
        \and 
        Egon Balas\inst{2}
    }
    \institute{%
             \textsuperscript{\href{mailto:akazachkov@ufl.edu}{\Letter}} A. M. Kazachkov
             \at
             University of Florida \\
             \email{akazachkov@ufl.edu} \\
             ORCID: 0000-0002-4949-9565
             \and
             E. Balas
             \at
             Carnegie Mellon University \\
            \email{eb17@andrew.cmu.edu}
    }
    \date{\today}
    \authorrunning{Kazachkov and Balas}
	\newcommand*{\email}[1]{%
    		\href{mailto:#1}{#1}\par
    		}
	\author{%
		Aleksandr M. Kazachkov%
			\thanks{University of Florida, Gainesville, FL, USA (\email{akazachkov@ufl.edu}).}
		\and
		Egon Balas%
			\thanks{E.~Balas passed away during the preparation of this manuscript, which started when both authors were at Carnegie Mellon University. The core ideas and early results are documented in the PhD dissertation of \citet[Chapter~5]{Kazachkov18}. A.M.~Kazachkov completed the computational experiments, analysis, and writing independently.}
	}
	\date{February 28, 2023}
\title{Monoidal Strengthening of Simple \texorpdfstring{$\mathcal{V}$}{V}-Polyhedral Disjunctive Cuts}
    \newtheorem{theorem}{Theorem}
    \newtheorem*{theorem*}{Theorem}
    \newtheorem{lemma}[theorem]{Lemma}
    \theoremstyle{remark}
    \newtheorem*{subclaim*}{Claim}
    \theoremstyle{definition}
    \newtheorem*{definition*}{Definition}
    \newtheorem{example}{Example}
    \newtheorem*{example*}{Example}
\renewcommand{\vec}[1]{\boldsymbol{#1}}
\newcommand{\rlaptext}[1]{\rlap{\text{#1}}}
\DeclareMathAlphabet{\mathpzc}{OT1}{pzc}{m}{it}
\newcommand{\pointset}{\mathcal{P}}
\newcommand{\rayset}{\mathcal{R}}
\newcommand{\intvars}{\mathcal{I}}
\newcommand{\PI}{P_{I}}
\newcommand{\PD}{P_{D}}
\newcommand{\disjTermsIndexSet}{\mathcal{T}}
\newcommand{\M}{\ref{M}}
\newcommand{\alphatk}{\ref{alphatk}}
\newcommand{\Deltat}{\Delta^t}
\newcommand{\monoidSet}{\mathbbm{M}}
\newcommand{\monoid}{\vec{m}}
\newcommand{\numRowsP}{q}
\newcommand{\numRowsDt}{q_t}
\newcommand{\numRowsPt}{q_t'}
\newcommand{\numRowsQt}{\hat{q}_t}
\newcommand{\mxrow}[2]{#1_{#2 \boldsymbol{\cdot}}}
\newcommand{\mxcol}[2]{#1_{\boldsymbol{\cdot} #2}}
\newcommand{\submx}[2]{#1_{#2}}
\newcommand{\goodVPCSet}{$\ge$10\%}
\newcommand{\instance}[1]{\texttt{#1}}
\def\ifmonospace{\ifdim\fontdimen3\font=0pt }
\def\Cpp{%
\ifmonospace%
    C++%
\else%
    C\nolinebreak[4]\raisebox{0.5ex}{\tiny\textbf{++}}%
\fi%
\spacefactor1000 }
\newcommand{\exqed}{\hfill$\blacksquare$}
\begin{document}
\maketitle

\begin{abstract}
Disjunctive cutting planes can tighten a relaxation of a mixed-integer linear program. Traditionally, such cuts are obtained by solving a higher-dimensional linear program, whose additional variables cause the procedure to be computationally prohibitive. Adopting a $\mathcal{V}$-polyhedral perspective is a practical alternative that enables the separation of disjunctive cuts via a linear program with only as many variables as the original problem. The drawback is that the classical approach of monoidal strengthening cannot be directly employed without the values of the extra variables appearing in the extended formulation. We derive how to compute these values from a solution to the linear program generating $\mathcal{V}$-polyhedral disjunctive cuts. We then present computational experiments with monoidal strengthening of cuts from disjunctions with as many as 64 terms. Some instances are dramatically impacted, with strengthening increasing the gap closed by the cuts from 0 to 100\%. However, for larger disjunctions, monoidal strengthening appears to be less effective, for which we identify a potential cause.
\ifspringer
  \keywords{Mixed-integer linear programming \and Cutting planes \and Disjunctive programming \and Monoidal strengthening}
\fi
\end{abstract}

\section{Introduction}
\label{sec:intro}

Disjunction-based cutting planes, or \emph{disjunctive cuts}, are a strong class of valid inequalities for mixed-integer programming problems,
which can be used as a framework for analyzing or generating general-purpose cuts~\cite{Balas79}.
Their strength comes at a high computational cost, due to which only very special cases of disjunctive cuts have been deployed in optimization solvers.
As a step towards practicality, \citet{BalKaz22+_vpc-arxiv} introduce a relaxation-based $\mathcal{V}$-polyhedral paradigm for disjunctive cuts,
which trades off some theoretical strength for computational efficiency.
The approach selects a small number of points and rays whose convex hull forms a relaxation of the disjunction;
as a result, some potential cuts are no longer valid,
but strong cuts are nevertheless guaranteed to be obtainable.
Further, cuts from this relaxation, called \emph{$\mathcal{V}$-polyhedral (disjunctive) cuts} (VPCs), can be generated via a relatively compact linear program,
called the \emph{point-ray linear program} (PRLP),
compared to the usual higher-dimensional \emph{cut-generating linear program} (CGLP) for disjunctive cuts~\cite{Balas79,BalCerCor93,BalCerCor96}.
Hence, with VPCs, it is more computationally efficient to improve the disjunction by adding terms and increase the relaxation quality, 
thereby accessing disjunctive cuts that differ substantially from the families of cuts typically applied in solvers.

VPCs improve the average \emph{(integrality) gap closed} substantially relative to \emph{Gomory mixed-integer cuts} (GMICs) and other standard cuts in solvers.
However, the computational experiments by \citet{BalKaz22+_vpc-arxiv} reveal a curiosity: there are instances for which GMICs (which can be derived as cuts from a two-term disjunction) remain stronger than VPCs even when using large variable disjunctions.
For example, for the instance \instance{10teams}, originally part of the 3rd Mixed Integer Programming Library (MIPLIB)~\cite{MIPLIB3}, GMICs close 100\% of the integrality gap, while VPCs from a 64-term disjunction close 0\% of the gap.

A potential explanation for this phenomenon is that GMICs benefit from a \emph{strengthening} procedure that cannot be directly applied to VPCs.
Specifically, the GMIC two-term disjunction can be obtained via \emph{monoidal strengthening} of a disjunction on a single variable~\cite{BalJer80, BalPer03, KazSer23}.
Monoidal strengthening of cuts from more general disjunctions is also possible, but the procedure ostensibly requires a \emph{simple} disjunction, where each term only imposes a single new constraint.
This is not a theoretical barrier, as any cut from a general disjunction can also be derived from a simple disjunction obtained from the general one by aggregating the constraints defining each disjunctive term.
The multipliers for this aggregation are precisely the Farkas certificate for the validity of the cut.
The key challenge for VPCs is that this certificate is not readily available, because the PRLP only has variables for the cut coefficients,
compared to the CGLP that explicitly includes variables for the Farkas multipliers.
Our contributions, summarized next, are to identify a way to efficiently apply monoidal strengthening for the particular version of the VPC framework introduced in \citet{BalKaz22+_vpc-arxiv}, as well as to implement and computationally evaluate this strengthening idea.

\paragraph{Contributions.}
Given a VPC, one can solve the CGLP with cut coefficients fixed and retrieve the required values of the aggregation multipliers, in order to apply monoidal strengthening.
Unfortunately, the computational effort associated to this is likely to be prohibitive.
Our first contribution, discussed in \cref{sec:correspondence}, is observing that solving the CGLP is unnecessary: it suffices to use the inverse of an easily-identified nonsingular matrix per disjunctive term.
Furthermore, for the type of \emph{simple} VPCs proposed and tested by \citet{BalKaz22+_vpc-arxiv}, this inverse is readily available within the cut generation process.

Next, in \cref{sec:computation}, we discuss computational experiments with strengthening simple VPCs on a set of benchmark instances.
We compare the strength to unstrengthened VPCs and to GMICs, for disjunctions ranging in size up to 64 terms.
We find that strengthening can significantly improve the gap closed for some instances.
Furthermore, we see that GMICs and unstrengthened VPCs tend to be complementary in terms of which instances they benefit,
but applying monoidal strengthening enables the two families to be simultaneously effective for more instances.
The results are most striking for two-term disjunctions, in which strengthened VPCs close 40\% more gap than unstrengthened VPCs, on average.
For example, returning to the instance \instance{10teams}, the VPCs from a single variable disjunction close 0\% of the integrality gap, but this value goes to 100\% after strengthening the cuts.
However, as the size of the disjunction increases, the relative improvement by strengthening becomes smaller.
Our final contribution, in \cref{sec:example:large-disjunction}, is identifying a theoretical source of this weakness.

\paragraph{Related Work.}

A focal point in the literature on monoidal strengthening for disjunctive cuts~\cite{BalJer80} (see also \citet[Section~7]{Balas79})
is the special case of \emph{split disjunctions},
which are \emph{parallel} two-term disjunctions that are used for GMICs and related cut families.
In this context, the use of the CGLP leads to \emph{lift-and-project cuts} (L\&PCs)~\cite{BalCerCor93},
to which monoidal strengthening can be applied~\cite[Section~2.4]{BalCerCor96}.
The family of strengthened L\&PCs is equivalent to GMICs,
as shown by \citet{BalPer03},
and to mixed-integer rounding inequalities~\cite{NemWol88,NemWol90}, as discussed in \citet{CorLi01}.
\citet{BalPer03} provide an appealing geometric interpretation of this connection via intersection cuts~\cite{Balas71}:
every undominated L\&PC can be derived as an intersection cut from a basis in the original problem space.
As a result, L\&PCs can be generated without explicitly building the CGLP
and without hindering \latin{a posteriori} strengthening of the cuts.
\citet{Bonami12} presents a different method for separating L\&PCs in the original space of variables that is also amenable to strengthening.
Avoiding formulating the higher-dimensional CGLP is the key advance that has enabled the effective inclusion of L\&PCs in several solvers.

Sidestepping the CGLP continues to be crucial to move beyond split disjunctions.
However, the aforementioned approaches~\cite{BalPer03,Bonami12} rely on properties of the split set;
for example, with general disjunctions, there exist cuts that dominate all intersection cuts~\cite{AndCorLi05,Kis14,BalKis16},
so one cannot hope to merely pivot among bases in the original space.
Nonetheless, a stream of work~\cite{JudSheRibFau06_complementarity,BonConCorMolZam13,Kis14} 
extends cut generation in the original space to general two-term disjunctions,
and monoidal strengthening applies to the resulting cuts~\cite{FisPfe17}.
No further extension of this technique to more general disjunctions has been reported in the literature.

This motivates the use of VPCs, due to the PRLP's advantage of having the same number of variables as the original problem.
The difficulty is that a description of a polyhedron using points and rays may be exponentially larger than using inequalities, causing exponentially many constraints in the PRLP.
This naturally leads to row generation in prior work by \citet{PerBal01} and \citet{LouPoiSal15} when invoking the $\mathcal{V}$-polyhedral perspective.
In the experiments by \citet{PerBal01}, for disjunctions with 16 terms, separating cuts via the PRLP with row generation is an order of magnitude faster than via the CGLP.
Nonetheless, row generation is time consuming, as multiple PRLPs must be solved to find one valid inequality.

The remedy by \citet{BalKaz22+_vpc-arxiv} is to construct a relaxation of each disjunctive term, where the resulting PRLP has few rows and immediately produces valid cuts.
This is successful at quickly generating cuts from large disjunctions,
but the average gap closed by the cuts alone is less than that from GMICs.
It is only when VPCs and GMICs are used together that a marked improvement in gap closed is observed,
which shows that VPCs affect a different region of the relaxation than GMICs.
However, as mentioned with the \instance{10teams} instance in which GMICs close all of the gap, while VPCs close none,
the results also suggest that the absence of strengthening for VPCs is a significant deficiency.

As discussed, the vanilla monoidal strengthening presented by \citet{BalJer80} does not directly apply to VPCs due to the lack of the values of the aggregation multipliers.
\citet[Section~6]{BalQua13} show that a cross-polytope disjunction, arising from using multiple rows of the simplex tableau, can be strengthened by \emph{modularizing} the inequalities defining the disjunction, replacing the coefficients of integer-restricted nonbasic variables,
and they prove the form of the optimal strengthening for the two-row case.

An alternative to monoidal strengthening is the group-theoretic approach~\cite{Johnson74,GomJoh72a},
equivalent to monoidal strengthening under some conditions.
Specifically, ``trivial lifting'' has been applied to simple disjunctions~\cite{Espinoza10,DeyWol10,BasBonCorMar11a,DeyLodTraWol14,XavFukPoi21}.
Evaluating the trivial lifting is expensive in general~\cite{FukPoiXav19}, and it does not directly apply to arbitrary disjunctive cuts.

While this paper exclusively approaches disjunctive cut generation via the VPC framework,
there exist other methods for producing strong disjunctive cuts without solving the higher-dimensional CGLP.
Any such approach could potentially benefit from the efficient computation of a Farkas certificate.
For example, a common technique in the literature is to use a disjunction to strengthen cuts via tilting, which has been applied to linear and nonlinear integer optimization problems~\cite{Perregaard03,KilLinLueMil14,Kazachkov18,KroMis21_disj-cut-strengthening-convex-MINLP}.

\section{Notation and Background}
\label{sec:background}

Our target is to find strong valid cuts to tighten the natural linear relaxation of the mixed-integer linear program below, given rational data:
    \begin{equation}
        \begin{aligned}
          \min_{x \in \R^n} \quad 
            &c^\T x
            \\
            & \mxrow{A}{i} x \ge b_i &\quad& \text{for $i \in [\numRowsP]$,}
            \\
            & x_j \ge 0 &&\text{for $j \in [n]$,}
            \\
            & x_j \in \Z &&\text{for $j \in \intvars$} .
        \end{aligned}
        \label{IP}\tag{IP}
    \end{equation}
Here, $[n] \defeq \{1,\ldots,n\}$ for any integer $n$,and $\intvars \subseteq [n]$ is the set of integer-restricted variables.
For a given matrix $A$, we denote the $i$th row by ``$\mxrow{A}{i}$'' and the $j$th column by ``$\mxcol{A}{j}$''.
Let $\PI$ denote the feasible region of \cref{IP}, and let $P \defeq \{x \in \nonnegreals^n \suchthat Ax \ge b\}$.

One way to strengthen the formulation $P$ (with respect to $\PI$) is to use logical conditions to formulate a \emph{disjunction}, from which valid inequalities for $\PI$ can then be derived.
Suppose $\vee_{t \in \disjTermsIndexSet} (D^t x \ge D^t_0)$ is a valid disjunction, in the sense that $\PI \subseteq \cup_{t \in \disjTermsIndexSet} \{x \in \R^n \suchthat D^t x \ge D^t_0\}$.
Let 
    $Q^{t} \defeq \{x \in P \suchthat D^t x \ge D^t_0\}$.
This is an \emph{$\mathcal{H}$-polyhedral (inequality) description}.
We assume $Q^t \ne \emptyset$ for all $t \in \disjTermsIndexSet$.

Let $P^t \defeq \{x \in \R^n \suchthat A^t x \ge b^t\}$ denote a relaxation of $Q^t$, 
where $A^t x \ge b^t$ is defined by a subset of the constraints defining $Q^t$.
For the VPC procedure, we must ensure that $P^t$ has relatively few extreme points and rays, \latin{i.e.}, it has a compact \emph{$\mathcal{V}$-polyhedral description} $(\pointset^t, \rayset^t)$, so that $P^t = \conv(\pointset^t) + \cone(\rayset^t)$.
Define the \emph{disjunctive hull} 
    $
        \PD
        \defeq
        \cl\conv(\cup_{t \in \disjTermsIndexSet} P^t)
        ,
    $
    which can be described by the point-ray collection
  $(\pointset, \rayset) \defeq (\cup_{t \in \disjTermsIndexSet} \pointset^t, \cup_{t \in \disjTermsIndexSet} \rayset^t)$.
For $t \in \disjTermsIndexSet$, let $\numRowsPt$ be the number of rows of $A^t$.
We first summarize some important disjunctive programming concepts and the two cut-generating paradigms that we are relating.

\paragraph{CGLP.}
One way to generate valid cuts for $\PD$ is through the CGLP, which is an application of \emph{disjunctive programming duality}~\cite[Section~4]{Balas79}.
Specifically, an inequality $\alpha^\T x \ge \beta$ is valid for $\PD$ if and only if the inequality is valid for each $P^t$, $t \in \disjTermsIndexSet$.
Consequently, by Farkas's lemma~\cite{Farkas02}, 
$\alpha^\T x \ge \beta$ is valid for $\PD$ if and only if the following system is feasible, in variables $(\alpha,\beta,\{v^t\}_{t \in \disjTermsIndexSet})$, where $v^t \in \R^{1 \times \numRowsPt}$ is a row vector of appropriate length for each $t \in \disjTermsIndexSet$:
\begin{align}
  \label{gen-CGLP}
  \left.
  \begin{array}{l}
    \alpha^\T = v^t A^t \\
    \beta \le v^t b^t \\
    v^t \in \nonnegreals^{\numRowsPt}
  \end{array}
  \ \ \right\}
  \ \ 
  \rlaptext{for all $t \in \disjTermsIndexSet$.}
\end{align}
We refer to $\{v^t\}_{t \in \disjTermsIndexSet}$ as the \emph{Farkas certificate} for the validity of $\alpha^\T x \ge \beta$ for $\PD$.

To generate cuts with \eqref{gen-CGLP},
one typically maximizes the violation with respect to a $\PI$-infeasible point, 
after adding a normalization, which can be a crucial choice~\cite{FisLodTra11}.
For example, the constant of the cut can be fixed to $\bar{\beta} \in \reals$:
  \begin{equation}
  \label{CGLP-fixed-beta}
  \tag{CGLP($\bar{\beta}$)}
    \left\{(
        \alpha,\{v^t\}_{t \in \disjTermsIndexSet}) 
        \suchthat 
        (\alpha,\bar{\beta},\{v^t\}_{t \in \disjTermsIndexSet}) 
        \text{ is feasible to \eqref{gen-CGLP}}
    \right\}.
  \end{equation}

\paragraph{PRLP.}
An alternative way to generate disjunctive cuts is through the \emph{reverse polar} of $\PD$~\cite[Section~5]{Balas79}, which is defined with respect to a given $\bar{\beta} \in \R$ as 
  \begin{equation*}
    \left\{ 
        \alpha \in \R^n \suchthat \alpha^\T x \ge \bar{\beta} \text{ for all $x \in \PD$} 
    \right\}.
  \end{equation*}
Clearly this captures all of the valid inequalities for $\PD$ whose constant is equal to $\bar{\beta}$.
Since $x \in \PD$ if and only if $x \in \conv(\pointset) + \cone(\rayset)$, it holds that $\alpha^\T x \ge \bar{\beta}$ is valid for $\PD$ if and only if it is satisfied by all of the points and rays in $(\pointset,\rayset)$.
This yields the system \eqref{PRLP-fixed-beta}, in variables $\alpha \in \R^n$, for a fixed $\bar{\beta}$:
  \begin{align}
  \label{PRLP-fixed-beta}
  \tag{PRLP($\bar{\beta}$)}
  \begin{aligned}
    \alpha^\T p &\ge \bar{\beta} &\quad& \rlaptext{for all $p \in \pointset$} \\
    \alpha^\T r &\ge 0 &\quad& \rlaptext{for all $r \in \rayset$.}
  \end{aligned}
  \end{align}
The feasible solutions to \cref{PRLP-fixed-beta} are what we refer to as VPCs.
  
As discussed, the advantage of \eqref{PRLP-fixed-beta} over \eqref{CGLP-fixed-beta} is the absence of the Farkas multipliers as variables, so VPCs are generated without requiring a lifted space.
As we see next, the disadvantage to \cref{PRLP-fixed-beta} is that these missing variables are used in strengthening the cuts after they are generated.

\paragraph{Monoidal strengthening.}

\citet{BalJer80} strengthen cuts with a \emph{monoid}:
  \begin{equation}
  \label{M}
  \tag{\ensuremath{\monoidSet}}
    \monoidSet \defeq \left\{ \monoid \in \Z^{\card{\disjTermsIndexSet}} \suchthat \sum_{t \in \disjTermsIndexSet} \monoid_t \ge 0 \right\}.
  \end{equation}
It is also assumed that, 
for each $t \in \disjTermsIndexSet$,
there exists a finite lower bound vector $\ell^t$ such that $D^t x \ge \ell^t$ for all $x \in \PI$.
Let $\Deltat \defeq D^t_0 - \ell^t$.

To strengthen the cut, we improve the underlying disjunction.
Specifically, given a valid disjunction $\vee_{t \in \disjTermsIndexSet} (D^t x \ge D^t_0)$,
for any $\monoid \in \ref{M}$ and $k \in \intvars$,
the disjunction $\vee_{t \in \disjTermsIndexSet} (\tilde{D}^t x \ge \tilde{D}^t_0)$ is also valid,
where $\mxcol{\tilde{D}^t}{k} \defeq \mxcol{D^t}{k} + \Deltat \monoid_t$, 
and $\mxcol{\tilde{D}^t}{j} = \mxcol{D^t}{j}$ for all $j \ne k$.
The strengthened cut is obtained by applying the Farkas certificate of the unstrengthened cut to the strengthened disjunction.

Let $\numRowsDt$ denote the number of constraints in $D^t x \ge D^t_0$ for term $t \in \disjTermsIndexSet$.
Given row vectors
	$(u^t,u^t_0) \in \nonnegreals^{1 \times \numRowsP} \times \nonnegreals^{1 \times \numRowsDt}$,
define
	\begin{equation}
	\label{alphatk}
	\tag{\ensuremath{\alpha^t_k}}
	  \alpha^t_k \defeq {u}^t \mxcol{A}{k} + u^t_0 \mxcol{D^t}{k}.
	\end{equation}
Then (using an appropriate CGLP) the cut $\alpha^\T x \ge \beta$ is valid for $\PD$,
where
    \begin{equation*}
            \alpha_k \defeq \max_{t \in \disjTermsIndexSet} \{ \alphatk \}
            \qquad
            \text{ and }
            \qquad
            \beta \defeq \min_{t \in \disjTermsIndexSet} \{ u^t b + u^t_0 D^t_0 \}.
    \end{equation*}
(The above applies to cuts valid for $\vee_{t \in \disjTermsIndexSet} Q^t$; for $\PD$, assume a value of zero for the multipliers on constraints of $Q^t$ that are not present in $P^t$.)
Define $\hat{u}^t_k \defeq \alpha_k - \alphatk$.
We now apply monoidal strengthening to the cut $\alpha^\T x \ge \beta$.
	
\begin{theorem}[{\cite[Theorem~3]{BalJer80}}]
\label{thm:monoidal-strengthening-structural-space}
  Given 
    $({u}^t,u^t_0) \in \nonnegreals^{1 \times \numRowsP} \times \nonnegreals^{1 \times \numRowsDt}$ for $t \in \disjTermsIndexSet$,
  the inequality $\widetilde{\alpha}^\T x \ge \beta$ is valid for $\PI$,
  where
  $\widetilde{\alpha}_k \defeq \alpha_k$ for $k \notin \intvars$,
  and,
  for $k \in \intvars$,
    \begin{equation*}
      \widetilde{\alpha}_k 
        \defeq \inf_{\monoid \in \M} \max_{t \in \disjTermsIndexSet} \left\{ \alphatk + u^t_0 \Deltat \monoid_t \right\}
        = \alpha_k + \inf_{\monoid \in \M} \max_{t \in \disjTermsIndexSet} \left\{ -\hat{u}^t_k + u^t_0 \Deltat \monoid_t \right\}.
    \end{equation*}
\end{theorem}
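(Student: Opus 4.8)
The plan is to obtain $\widetilde{\alpha}^\T x \ge \beta$ as the infimum, over a suitable family of strengthened disjunctions, of the cut produced by feeding the \emph{same} multipliers $\{(u^t, u^t_0)\}_{t \in \disjTermsIndexSet}$ through the aggregation that produced $\alpha^\T x \ge \beta$. I would first record how these multipliers certify the original cut: for any $t \in \disjTermsIndexSet$ and any $x \in Q^t$, aggregating $Ax \ge b$ with $u^t \ge 0$ and $D^t x \ge D^t_0$ with $u^t_0 \ge 0$ gives $(u^t A + u^t_0 D^t)\, x \ge u^t b + u^t_0 D^t_0$; since $x \ge 0$ and $\alpha_j \ge u^t \mxcol{A}{j} + u^t_0 \mxcol{D^t}{j}$ for every $j \in [n]$, this yields $\alpha^\T x \ge u^t b + u^t_0 D^t_0 \ge \beta$, and because $\PI \subseteq \bigcup_{t \in \disjTermsIndexSet} Q^t$ the cut is valid for $\PI$ (the parenthetical remark preceding the theorem handles $\PD$ analogously). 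This computation is precisely the template reused below.

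Next I would invoke the strengthened disjunctions. Fix a monoid $\monoid^k \in \M$ for each $k \in \intvars$, and let $\tilde{D}^t$ be $D^t$ with $\mxcol{D^t}{k}$ replaced by $\mxcol{D^t}{k} + \Deltat \monoid^k_t$ for every $k \in \intvars$, with $D^t_0$ and all non-integer columns left unchanged. To see that $\bigvee_{t \in \disjTermsIndexSet}(\tilde{D}^t x \ge D^t_0)$ is valid for $\PI$ (the crux of monoidal strengthening, already asserted in the discussion before the theorem), take $x \in \PI$ and set $w_t \defeq \sum_{k \in \intvars} \monoid^k_t x_k$, which is an integer since $x_k \in \Z$ for $k \in \intvars$. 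Then $\tilde{D}^t x = D^t x + w_t \Deltat$ and $\sum_{t \in \disjTermsIndexSet} w_t = \sum_{k \in \intvars} x_k \sum_{t \in \disjTermsIndexSet} \monoid^k_t \ge 0$. If $t^\ast$ is a term with $D^{t^\ast} x \ge D^{t^\ast}_0$, then, using $\Deltat \ge 0$ and $D^t_0 = \ell^t + \Deltat$, term $t^\ast$ of the new disjunction holds whenever $w_{t^\ast} \ge 0$ and any term $t$ holds whenever $w_t \ge 1$; were no new term to hold we would have $w_t \le 0$ for all $t$ and $w_{t^\ast} \le -1$, contradicting $\sum_{t} w_t \ge 0$.

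I would then re-aggregate and pass to the infimum. Applying the same $(u^t, u^t_0)$ to $\bigvee_{t \in \disjTermsIndexSet}(\tilde{D}^t x \ge D^t_0)$ exactly as in the first paragraph, and using that only the integer columns changed while $D^t_0$ did not, the resulting valid cut has right-hand side $\beta$, has $j$th coefficient $\alpha_j$ for $j \notin \intvars$, and has $k$th coefficient $\max_{t \in \disjTermsIndexSet}\{\alphatk + u^t_0 \Deltat \monoid^k_t\}$, which depends on the chosen monoids only through $\monoid^k$. Hence this cut is valid for $\PI$ for \emph{every} family $(\monoid^k)_{k \in \intvars}$. Now fix $x \in \PI$; since each $x_k$ with $k \in \intvars$ is nonnegative, the left-hand side $\sum_{j \notin \intvars} \alpha_j x_j + \sum_{k \in \intvars} \big( \max_{t \in \disjTermsIndexSet}\{\alphatk + u^t_0 \Deltat \monoid^k_t\} \big) x_k$ is separable over the independent blocks $\monoid^k$, so its infimum over all $(\monoid^k)_{k \in \intvars}$ equals $\sum_{j \notin \intvars} \alpha_j x_j + \sum_{k \in \intvars} \widetilde{\alpha}_k x_k$ and is still at least $\beta$, where $\widetilde{\alpha}_k = \inf_{\monoid \in \M} \max_{t \in \disjTermsIndexSet}\{\alphatk + u^t_0 \Deltat \monoid_t\}$. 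The second displayed expression then follows upon substituting $\alphatk = \alpha_k - \hat{u}^t_k$.

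The two steps that need care are the validity of the strengthened disjunction---the actual content of monoidal strengthening, which rests on the integrality and nonnegativity of the $x_k$ together with $\sum_{t \in \disjTermsIndexSet} \monoid_t \ge 0$---and the interchange of the per-coordinate infimum with the evaluation of the cut, which is legitimate precisely because $x_k \ge 0$ on $\PI$. Everything else is bookkeeping with the frozen Farkas multipliers $(u^t, u^t_0)$, which never change between the original and the strengthened cuts.
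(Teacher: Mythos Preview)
The paper does not supply its own proof of this statement: it is quoted verbatim as \cite[Theorem~3]{BalJer80} and used as background, so there is nothing in the paper to compare against beyond the one-paragraph description preceding the theorem. Your proposal reconstructs the classical Balas--Jeroslow argument correctly: you verify that replacing several integer columns simultaneously (one monoid $\monoid^k$ per $k\in\intvars$) still yields a valid disjunction for $\PI$, then re-aggregate with the \emph{same} $(u^t,u^t_0)$, and finally pass to the coordinatewise infimum using $x_k\ge 0$. The only place where you go slightly beyond the paper's pre-theorem discussion is in handling all $k\in\intvars$ at once rather than a single index; your integrality-based contradiction ($w_{t^\ast}\le -1$ and $w_t\le 0$ for $t\ne t^\ast$ forces $\sum_t w_t\le -1$) is exactly the right device, and the separability of the infimum is justified precisely because $x_k\ge 0$ on $\PI$.
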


Thus, the Farkas certificate 
    $\{(u^t, u^t_0)\}_{t \in \disjTermsIndexSet}$ 
is used for monoidal strengthening.
Computing these values without solving the CGLP is our next target.

\section{Correspondence Between PRLP and CGLP Solutions}
\label{sec:correspondence}

Let $\bar{\alpha}^\T x \ge \bar{\beta}$ be a valid inequality for $\PD$, corresponding to a feasible solution to \cref{PRLP-fixed-beta}.
Our goal is to compute Farkas multipliers certifying the cut's validity without explicitly solving the CGLP.
While one can solve for values $v^t$ that satisfy
    $
            \bar{\alpha}^\T = v^t A^t,\,
            \bar{\beta} = v^t b^t,\,
            v^t \ge 0
        ,
    $
we provide an improvement via basic linear programming concepts.
We first present a special case in \cref{sec:simple-vpcs,sec:simple}, when the disjunctive terms $P^t$ are not primal degenerate,
a condition that is satisfied by the VPC procedure implemented for our experiments.
Then, \cref{sec:general-vpcs} discusses a challenge posed by the general case.

We assume that $\bar{\alpha}^\T x \ge \bar{\beta}$ is supporting for all terms in $\disjTermsIndexSet$.
This is for ease of notation, as otherwise we would need to add an index $t$ to the constant side.
Concretely, the assumption is without loss of generality because, for any term $t \in \disjTermsIndexSet$, we can increase the constant side of the cut until we obtain an inequality $\bar{\alpha}^\T x \ge \bar{\beta}_t$ that is supporting for term $t$, though perhaps invalid for other terms.
The value of $\bar{\beta}_t$ can be quickly calculated by taking the dot product of $\bar{\alpha}$ with every point in $\pointset^t$.
We can then find a certificate $v^t$ of the validity of $\bar{\alpha}^\T x \ge \bar{\beta}_t$ for $P^t$,
which also serves as a certificate for the weaker inequality $\bar{\alpha}^\T x \ge \bar{\beta}$.
We state, without proof, a slightly more general version of this in \cref{lem:weaker-certificate}.

\begin{lemma}
\label{lem:weaker-certificate}
    For $t \in \disjTermsIndexSet$, let $C^t \supseteq P^t$ and $\bar{\beta}_t \ge \bar{\beta}$ such that $\bar{\alpha}^\T x \ge \bar{\beta}_t$ is valid for $C^t$.
    Then, given any Farkas certificate for the validity of the inequality $\bar{\alpha}^\T x \ge \bar{\beta}_t$ for $C^t$,
    the same multipliers certify that $\bar{\alpha}^\T x \ge \bar{\beta}$ is valid for $P^t$. \qed
\end{lemma}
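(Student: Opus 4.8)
The plan is to unwind the definition of a Farkas certificate and check that its two defining conditions survive both the enlargement $P^t \subseteq C^t$ and the weakening $\bar\beta \le \bar\beta_t$. Fix a term $t \in \disjTermsIndexSet$. A Farkas certificate for the validity of $\bar\alpha^\T x \ge \bar\beta_t$ for $C^t$ is a nonnegative vector of multipliers on the inequalities describing $C^t$ that aggregates them into an inequality dominating $\bar\alpha^\T x \ge \bar\beta_t$: the aggregate reproduces the coefficient vector $\bar\alpha^\T$ exactly and produces a constant at least $\bar\beta_t$.

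First I would pin down the two $\mathcal{H}$-descriptions. As elsewhere in the paper, the containment $C^t \supseteq P^t$ arises by discarding some of the constraints defining $P^t$, so the rows describing $C^t$ form a subsystem of $A^t x \ge b^t$, say indexed by $S \subseteq [\numRowsPt]$; a certificate is then some $w \in \nonnegreals^{\card{S}}$ with $w \mxrows{A^t}{S} = \bar\alpha^\T$ and $w \submx{b^t}{S} \ge \bar\beta_t$. Next, extend $w$ to $v^t \in \nonnegreals^{\numRowsPt}$ by setting the entries outside $S$ to zero. Finally, verify the conditions of \eqref{gen-CGLP}: nonnegativity is immediate; the coefficient identity is unchanged, $v^t A^t = w \mxrows{A^t}{S} = \bar\alpha^\T$; and for the constant, $v^t b^t = w \submx{b^t}{S} \ge \bar\beta_t \ge \bar\beta$, where the last step is the hypothesis $\bar\beta_t \ge \bar\beta$. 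Hence $v^t$ certifies that $\bar\alpha^\T x \ge \bar\beta$ is valid for $P^t$, and since $t$ was arbitrary this would finish the proof.

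There is no real obstacle here; the only point requiring care — and the reason the ``same multipliers'' phrasing is slightly informal — is the identification in the first step, namely padding the $C^t$-certificate with zeros on the dropped rows so that it has the correct length to be read as multipliers on $A^t x \ge b^t$. Once that bookkeeping is fixed, both the set enlargement and the right-hand-side weakening feed into the argument only through the single monotone chain $v^t b^t \ge \bar\beta_t \ge \bar\beta$, and nothing else has to be checked. (If one wishes to permit $C^t$ to carry a genuinely unrelated $\mathcal{H}$-description, the statement should be read with an implicit correspondence between the constraints of $C^t$ and a subset of those of $P^t$; alternatively, one first applies Farkas's lemma to re-express the given certificate over the constraints of $P^t$, after which the same two verifications go through.)
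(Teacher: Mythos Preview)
Your argument is correct. The paper actually states this lemma \emph{without proof}: the \qed\ at the end of the statement is the author's signal that the result is considered immediate, and the sentence preceding the lemma already sketches the reason (``a certificate $v^t$ of the validity of $\bar{\alpha}^\T x \ge \bar{\beta}_t$ for $P^t$ \ldots\ also serves as a certificate for the weaker inequality $\bar{\alpha}^\T x \ge \bar{\beta}$''). Your write-up supplies exactly the details the paper suppresses --- padding the $C^t$-multipliers with zeros on the dropped rows and then checking the single chain $v^t b^t \ge \bar\beta_t \ge \bar\beta$ --- and your caveat about how to read ``same multipliers'' when $C^t$ is only given abstractly is an appropriate clarification.
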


For convenience, we introduce extra notation to refer to the feasible region of $Q^t$ as $\hat{A}^t x \ge \hat{b}^t$,
and we define the number of these constraints as $\numRowsQt \defeq \numRowsP + \numRowsDt + n$.
For $N \subseteq [\numRowsQt]$, define $\submx{\hat{A}^t}{N} x \ge \submx{\hat{b}^t}{N}$ as the constraints of $Q^t$ indexed by $N$.

\subsection{Simple VPCs}
\label{sec:simple-vpcs}

Our experimental setup in \cref{sec:computation} follows that of \citet{BalKaz22+_vpc-arxiv}, who focus on a variant of the VPC framework called \emph{simple} VPCs.
Let $p^t$ be a vertex of $Q^t$, for $t \in \disjTermsIndexSet$.
There exists a \emph{cobasis} for $p^t$, a set of $n$ linearly independent constraints among those defining $Q^t$ that are tight at $p^t$.
Let $N^t \subseteq [\numRowsQt]$ denote the indices of these $n$ constraints,
and define the \emph{basis cone} 
    $
        C^t \defeq 
            \{
                x \in \R^n \suchthat
                \submx{\hat{A}^t}{N^t} x \ge \submx{\hat{b}^t}{N^t}
            \}.
    $
The inequality $\bar{\alpha}^\T x \ge \bar{\beta}$ is a simple VPC if $P^t$ is a basis cone for each term.
The (translated) cone $C^t$ has a particularly easy $\mathcal{V}$-polyhedral representation:
there is a single extreme point $p^t$, and there are $n$ extreme rays $\{r^i\}_{i \in [n]}$.
The $i$th extreme ray of $C^t$ corresponds to increasing the ``slack'' on the $i$th constraint defining $C^t$~\cite[Chapter~6]{ConCorZam14}.
\cref{lem:A^t-invertible} states that, 
for simple VPCs,
the values of the variables $\{v^t\}_{t \in \disjTermsIndexSet}$ to \eqref{CGLP-fixed-beta} can be computed via the dot product of the cut coefficients with the rays of $C^t$.

\begin{lemma}
\label{lem:A^t-invertible}
    Let $C^t$ be a basis cone defined by $N^t$, the indices of $n$ linearly independent constraints of $Q^t$.
    If $\bar{\alpha}^\T x \ge \bar{\beta}$ is valid for 
    $C^t$,
    then the multiplier on constraint $i \in [n]$ of $C^t$ has value
        $
            v^t_i = \bar{\alpha}^\T r^i,
        $
    where $r^i$ is column $i$ of $(\submx{\hat{A}^t}{N^t})^{-1}$.
\end{lemma}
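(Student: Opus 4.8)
The plan is to exploit the fact that the basis cone $C^t$ is the set $\{x : \submx{\hat{A}^t}{N^t} x \ge \submx{\hat{b}^t}{N^t}\}$, where $\submx{\hat{A}^t}{N^t}$ is an invertible $n \times n$ matrix by the linear independence hypothesis. First I would write down the standard $\mathcal{V}$-polyhedral description of such a simplicial cone: with $M \defeq \submx{\hat{A}^t}{N^t}$ invertible, the unique vertex is $p^t = M^{-1} \submx{\hat{b}^t}{N^t}$, and the $n$ extreme rays are exactly the columns $r^1, \dots, r^n$ of $M^{-1}$, since $M r^i = e_i \ge 0$ shows $r^i$ is a recession direction, and these $n$ linearly independent directions span the cone. (This is the content of the cited \cite[Chapter~6]{ConCorZam14}.)

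Next I would apply Farkas's lemma / LP duality to the validity of $\bar\alpha^\T x \ge \bar\beta$ over $C^t$. Since $\bar\alpha^\T x \ge \bar\beta$ is valid for the nonempty polyhedron $C^t = \{x : M x \ge \submx{\hat{b}^t}{N^t}\}$, there is a nonnegative multiplier vector $v^t \in \nonnegreals^{1 \times n}$ with $\bar\alpha^\T = v^t M$ and $\bar\beta \le v^t \submx{\hat{b}^t}{N^t}$; this is precisely the \eqref{gen-CGLP} system restricted to term $t$ with $A^t = M$, $b^t = \submx{\hat{b}^t}{N^t}$. The key point is that because $M$ is square and invertible, the equality $\bar\alpha^\T = v^t M$ pins down $v^t$ uniquely: $v^t = \bar\alpha^\T M^{-1}$. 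Reading off coordinate $i$ gives $v^t_i = \bar\alpha^\T (M^{-1})_{\boldsymbol{\cdot} i} = \bar\alpha^\T r^i$, which is the claimed formula.

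The only thing that needs a word of care — and I expect this to be the main (minor) obstacle — is verifying that this uniquely determined $v^t$ is genuinely a valid Farkas certificate, i.e.\ that $v^t \ge 0$ and $\bar\beta \le v^t \submx{\hat{b}^t}{N^t}$. For nonnegativity: $v^t_i = \bar\alpha^\T r^i$, and since $r^i$ is an extreme ray of $C^t$ and $\bar\alpha^\T x \ge \bar\beta$ is valid (hence $\bar\alpha^\T r \ge 0$ for every recession direction $r$ of $C^t$, as is immediate from the \eqref{PRLP-fixed-beta} ray constraints or from letting $x \to \infty$ along $r$ in a feasible point), we get $v^t_i \ge 0$. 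For the constant side: $v^t \submx{\hat{b}^t}{N^t} = \bar\alpha^\T M^{-1} \submx{\hat{b}^t}{N^t} = \bar\alpha^\T p^t \ge \bar\beta$, using validity at the vertex $p^t \in C^t$. Thus $v^t$ with $v^t_i = \bar\alpha^\T r^i$ solves the term-$t$ block of \eqref{gen-CGLP}, and it is the unique such solution. Assembling these blocks over all $t \in \disjTermsIndexSet$ (noting all terms share the same $\bar\alpha$ and $\bar\beta$ by the earlier supporting assumption, or invoking \cref{lem:weaker-certificate}) yields a feasible point of \eqref{CGLP-fixed-beta}, completing the proof. $\qed$
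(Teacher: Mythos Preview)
Your proof is correct and follows essentially the same approach as the paper: both hinge on the invertibility of the cobasis matrix $\submx{\hat{A}^t}{N^t}$, identify the extreme rays of $C^t$ as the columns of its inverse, and read off $v^t = \bar{\alpha}^\T (\submx{\hat{A}^t}{N^t})^{-1}$ from the CGLP equality. The paper's version is terser---it introduces slack variables to write $x = p^t + \sum_i r^i s^t_i$ and leaves the identification of $v^t_i$ with $\bar{\alpha}^\T r^i$ implicit---whereas you are more explicit in verifying nonnegativity of $v^t$ and the constant-side inequality, which is a welcome addition rather than a divergence in method.
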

\begin{proof}
    Add nonnegative slack variables $s^t_{N^t}$ for each row indexed by $N^t$, so that $\submx{\hat{A}^t}{N^t} x - s^t_{N^t} = b^t_{N^t}$.
    Then observe that, being a cobasis, $\submx{\hat{A}^t}{N^t}$ is invertible,
    so 
        $
            x 
                = (\submx{\hat{A}^t}{N^t})^{-1} b^t_{N^t} + (\submx{\hat{A}^t}{N^t})^{-1} s^t_{N^t} 
                = p^t + \sum_{i \in N^t} r^i s^t_i.
        $
    The last equality follows from the derivation of the rays of $C^t$; see, for example, \citet[Chapter~6]{ConCorZam14}.
\end{proof}

Therefore, for simple VPCs, the Farkas certificate can be computed with no extra effort when given the point-ray representation of $\PD$.
Moreover, \citet{BalKaz22+_vpc-arxiv} obtain simple VPCs from the leaf nodes of a partial branch-and-bound tree and use $p^t$ as the optimal solution to the linear relaxation at each leaf;
implemented carefully, this can further reduce the computational load for generating then strengthening VPCs, as the values of the rays can be read from the optimal tableau,
which is typically readily available from a solver.

\subsection{Relaxations Without Primal Degeneracy}
\label{sec:simple}

Suppose the relaxation $P^t \supseteq Q^t$ is a \emph{simple polyhedron}, in which every extreme point and ray is defined by a \emph{unique} basis~\cite{Ziegler95}.
The basis cone $C^t$ used for simple VPCs is one example.
While the basis cone setting may seem quite narrow, it turns out to encompass more general situations.
Specifically, there always exists a basis cone $C^t \supseteq P^t$ such that $\bar{\alpha}^\T x \ge \bar{\beta}$ is valid and supporting for $C^t$.

\begin{lemma}
\label{lem:nondegenerate-case}
    Let $P^t$ be a simple polyhedron,
    and suppose the point-ray collection $(\pointset^t,\rayset^t)$ satisfies $P^t = \conv(\pointset^t) + \cone(\rayset^t)$.
    Let $\bar{\alpha}^\T x \ge \bar{\beta}$ be a valid inequality for $P^t$.
    Then there exists a vertex $p^t \in \pointset^t$ such that $\bar{\alpha}^\T x \ge \bar{\beta}$ is valid for the basis cone $C^t$ associated to $p^t$, defined with respect to the constraints of $P^t$.
\end{lemma}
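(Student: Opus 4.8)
The plan is to obtain $p^t$ as an optimal vertex of the linear program $\min\{\bar{\alpha}^\T x : x \in P^t\}$ and then to read off validity of the cut for the basis cone at that vertex directly from the optimality conditions, using primal nondegeneracy to control the cone's extreme rays.

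First I would record two consequences of the hypothesis that $\bar{\alpha}^\T x \ge \bar{\beta}$ is valid for $P^t = \conv(\pointset^t) + \cone(\rayset^t)$. Fixing any $p \in \pointset^t$ (which is nonempty, as $Q^t \subseteq P^t$ is nonempty and every vertex of a pointed polyhedron appears in any of its point-ray descriptions), the ray $\{p + \lambda r : \lambda \ge 0\}$ lies in $P^t$ for every $r \in \rayset^t$, so sending $\lambda \to \infty$ forces $\bar{\alpha}^\T r \ge 0$; and $\bar{\alpha}^\T p \ge \bar{\beta}$ for each $p \in \pointset^t$ by validity. Hence $\bar{\alpha}^\T x$ is bounded below by $\bar{\beta}$ on $P^t$, so the LP $z^\star \defeq \min\{\bar{\alpha}^\T x : x \in P^t\}$ is feasible and bounded, and (since $P^t$ is pointed) its optimum is attained at a vertex $p^t \in \pointset^t$, with $\bar{\alpha}^\T p^t = z^\star \ge \bar{\beta}$.

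Next, let $N^t$ be the unique basis of $p^t$ — the $n$ linearly independent constraints of $P^t$ tight at $p^t$ — and let $C^t$ be the associated basis cone, with apex $p^t$ and extreme rays $r^1, \dots, r^n$ obtained, as in \cref{lem:A^t-invertible}, from the inverse of the corresponding constraint submatrix. Since the constraints defining $C^t$ are a subset of those defining $P^t$, we have $P^t \subseteq C^t$. The crux is to show $\bar{\alpha}^\T r^i \ge 0$ for each $i \in [n]$, and this is exactly where primal nondegeneracy is used: because exactly the $n$ constraints indexed by $N^t$ are tight at $p^t$, every other constraint of $P^t$ is strictly slack there, so for sufficiently small $\epsilon > 0$ the point $p^t + \epsilon r^i$ still lies in $P^t$ — the slack on constraint $i$ strictly increases, the other constraints of $N^t$ stay tight, and the remaining constraints stay satisfied by continuity. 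Optimality of $p^t$ over $P^t$ then gives $\bar{\alpha}^\T(p^t + \epsilon r^i) \ge z^\star = \bar{\alpha}^\T p^t$, hence $\bar{\alpha}^\T r^i \ge 0$. Finally, writing an arbitrary $x \in C^t$ as $x = p^t + \sum_{i \in [n]} \lambda_i r^i$ with $\lambda \ge 0$ yields $\bar{\alpha}^\T x = \bar{\alpha}^\T p^t + \sum_{i \in [n]} \lambda_i \bar{\alpha}^\T r^i \ge \bar{\beta}$, so the cut is valid for $C^t$, as claimed.

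I expect the one genuinely load-bearing step to be $p^t + \epsilon r^i \in P^t$: at a degenerate vertex $p^t$ lies on more than $n$ facets of $P^t$, some basis-cone ray $r^i$ can immediately leave $P^t$, and then optimality of $p^t$ over $P^t$ says nothing about the sign of $\bar{\alpha}^\T r^i$; indeed the conclusion can fail without nondegeneracy, which is precisely the obstruction taken up in \cref{sec:general-vpcs}. Everything else — existence and uniqueness of the basis, $P^t \subseteq C^t$, the conic decomposition of points of $C^t$, and attainment of the LP optimum at a vertex — is routine once simplicity of $P^t$ is assumed.
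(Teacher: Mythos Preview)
Your proof is correct and follows essentially the same approach as the paper: pick $p^t$ as an optimal vertex of $\min\{\bar{\alpha}^\T x : x \in P^t\}$, use simplicity to get a unique basis cone, and deduce $\bar{\alpha}^\T r^i \ge 0$ from optimality. The only difference is exposition---the paper invokes nonnegativity of reduced costs directly, whereas you unpack this via the feasibility of $p^t + \epsilon r^i$, which makes the role of nondegeneracy more explicit but is the same argument.
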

\begin{proof}
    Let $p^t$ be an optimal solution to 
        $
            \min_x \{ \bar{\alpha}^\T x \suchthat x \in P^t \} 
            = \min_p \{ \bar{\alpha}^\T p \suchthat p \in \pointset^t \}.
        $
    Define $\bar{\beta}_t \defeq \bar{\alpha}^\T p$.
    Note that the rays in $\rayset^t$ need not be considered, as the optimization problem must be bounded since $\bar{\alpha}^\T x \ge \bar{\beta}_t$ is valid for all $x \in P^t$.
    The point $p^t$ has a unique basis, so the basis cone $C^t$ is defined by the (precisely) $n$ constraints of $P^t$ that are tight at $p^t$.
    Optimality of $p^t$ implies all reduced costs are nonnegative.
    It follows that $\bar{\alpha}^\T r \ge 0$ every ray $r \in C^t$.
    Since $\bar{\alpha}^\T p^t = \bar{\beta}_t \ge \bar{\beta}$, 
    the inequality $\bar{\alpha}^\T x \ge \bar{\beta}$ is valid for $C^t$.
\end{proof}

Therefore, we can invoke \cref{lem:weaker-certificate,lem:A^t-invertible} to find the Farkas certificate for this case.
Note that, when the given point-ray collection only contains extreme points and rays, 
the rays of $C^t$ for any basis cone of the simple polyhedron $P^t$ can be computed as the rays $\rayset^t$,
along with the directions $p - p^t$ for every point $p \in \pointset^t$ that is adjacent (one pivot away) from $p^t$.

\subsection{Relaxations with Primal Degeneracy}
\label{sec:general-vpcs}

Up to now, we have made the convenient assumption that the relaxation $P^t$ is a simple polyhedron.
More generally, there always exists a basis cone $C^t$, such that a cut valid for $P^t$ is valid for $C^t$.
With \cref{ex:degeneracy}, we illustrate the complication if $\bar{\alpha}^\T x \ge \bar{\beta}$ is supporting at a primal degenerate point of $P^t$:
a basis for that point needs to be chosen carefully, as the inequality may not be valid for some basis cones.
It can be computationally involved to find a valid basis in these situations, which prevents a direct application of our approach relying on simple polyhedra.
The purpose of this example is to highlight a crucial obstacle to a complete correspondence between PRLP and CGLP solutions, but we do not further investigate the nondegenerate case in this paper.

\begin{example}
\label{ex:degeneracy}

\begin{figure}[t]
  \centering
  \begin{subfigure}{.49\textwidth}
  \centering
      \begin{tikzpicture}[line join=round,line cap=round,x={(\xX cm, \xY cm)},y={(\yX cm, \yY cm)},z={(\zX cm,\zY cm)},>=stealth,scale=2.75] 
        \coordinate (p1) at (1/2,1/4,1);
        \coordinate (p2) at (4/9,13/18,35/36);
        
        \coordinate (p11) at (0,1/2,0);
        \coordinate (p12) at (0,1/2,3/4);
        
        \coordinate (p21) at (1,0,0);
        \coordinate (p22) at (1,1,0);
        \coordinate (p24) at (1,0,1/4);
        \coordinate (p23) at (1,1,0);
        
        \draw [fill opacity=0.5,fill=none, color=green!80!black, line width=1pt] (0,0,0) -- (0,1,0) -- (0,1,1) -- (0,0,1) -- cycle;
        
        \draw [polyhedron_edge] (p11) -- (p12) -- (p1) -- (p24) -- (p21) -- cycle;
        \draw [polyhedron_edge,dashed] (p11) --  (p22);
        \draw [polyhedron_edge] (p12) -- (p1) -- (p2) -- cycle;
        \draw [polyhedron_edge] (p24) -- (p1) -- (p2) -- (p23) -- cycle;
        \node [circle, inner sep=1.5pt, fill=none] at (0,1.35,.65) {\color{green!80!black} {\small $x_1 = 0$}};
        \node [circle, inner sep=1.5pt, fill=none] at (1,1.35,0.65) {\color{green!80!black} {\small \clap{$x_1 = 1$}}};
        \draw [fill opacity=0.5,fill=none, color=green!80!black, line width=1pt] (1,0,0) -- (1,1,0) -- (1,1,1) -- (1,0,1) -- cycle;
        \draw [polyhedron_edge] (p21) -- (p22) -- (p23) -- (p24) -- cycle;
        
        \node [circle, inner sep=1.5pt, fill=none] at (1/4,1,3/8) {\footnotesize \eqref{ex:c1}};
        \node [circle, inner sep=1.5pt, fill=none] at (3/16,1/2,1/2) {\footnotesize \eqref{ex:c2}};
        \node [circle, inner sep=1.5pt, fill=none] at (3/16,1/2,2/2) {\footnotesize \eqref{ex:c3}};
      \end{tikzpicture}
  \end{subfigure}
  \begin{subfigure}{.49\textwidth}
  \centering
      \begin{tikzpicture}[line join=round,line cap=round,x={(\xX cm, \xY cm)},y={(\yX cm, \yY cm)},z={(\zX cm,\zY cm)},>=stealth,scale=2.75]
        \draw [fill opacity=0.5,fill=none, color=green!80!black, line width=1pt] (0,0,0) -- (0,1,0) -- (0,1,1) -- (0,0,1) -- cycle;
        
        \draw [polyhedron_edge,dotted] (p12) -- (p1) -- (p2) -- cycle;
        \draw [polyhedron_edge,dotted] (p1) -- (p24);
        \draw [polyhedron_edge,dotted] (p2) -- (p23);
        \draw [polyhedron_edge,dashed] (p11) -- (p22);

        \draw [polyhedron_edge] (p12) -- (p11) -- (p21);
        \draw [polyhedron_edge] (p21) -- (p22);
        
        \node [circle, inner sep=1.5pt, fill=none] at (0,1.35,.65) {\color{green!80!black} {\small $x_1 = 0$}};
        \node [circle, inner sep=1.5pt, fill=none] at (1,1.35,0.65) {\color{green!80!black} {\small \clap{$x_1 = 1$}}};
        \draw [fill opacity=0.5,fill=none, color=green!80!black, line width=1pt] (1,0,0) -- (1,1,0) -- (1,1,1) -- (1,0,1) -- cycle;
        \draw [polyhedron_edge] (p21) -- (p22) -- (p23) -- (p24) -- cycle;
        \draw [cut line] (p12) -- (p23) -- (p24) -- cycle;
		\node [point, label={[label distance=-3pt]135:\footnotesize \clap{$p^{1}$}}] at (p12) {};
		\node [point, label={[label distance=-3pt]180:\footnotesize {$p^{2}$}}] at (p24) {};
		\node [point, label={[label distance=-3pt]0:\footnotesize \rlap{\ $p^{3}$}}] at (p23) {};
      \end{tikzpicture}
      \end{subfigure}
    \caption{\cref{ex:degeneracy}: Disjunctive terms with primal degeneracy, despite a nondegenerate initial polyhedron.
    The VPC is the red wavy line in the second panel.
    }
    \label{correspondence:fig:degeneracy}
\end{figure}
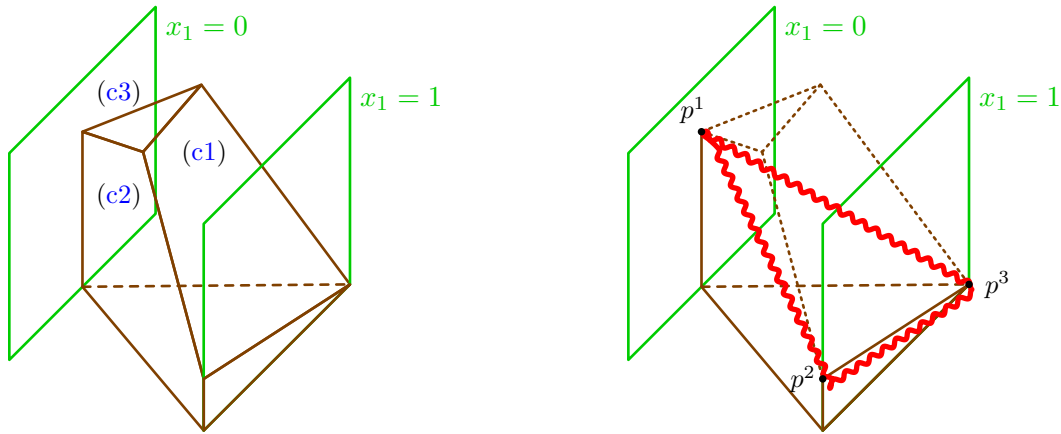

\cref{correspondence:fig:degeneracy} shows a polyhedron $P$, defined as the feasible solutions to %
  \begin{align}
    -(13/8) x_1 - (1/4) x_2 - x_3 &\ge -15/8 \tag{c1}\label{ex:c1}\\
    (1/2) x_1 {}+{} x_2 &\ge 1/2 \tag{c2}\label{ex:c2}\\
    (1/2) x_1 {}-{} x_3 &\ge -3/4 \tag{c3}\label{ex:c3}\\
    (1/2) x_1 {}-{} x_2 &\ge -1/2 \tag{c4}\label{ex:c4}\\
     x_2 &{}\ge{} 0. \tag{c5}\label{ex:c5}
  \end{align}

A valid cut from the disjunction $(-x_1 \ge 0) \vee (x_1 \ge 1)$ has coefficients
$\bar{\alpha}^\T = (-5/8,-1/4,-1)$ and constant $\bar{\beta} = -7/8$.
The cut, depicted in the right panel, is incident to point $p^1 = (0,1/2,3/4)$ on $P^1 \defeq \{x \in P \suchthat -x_1 \ge 0\}$.
This point is tight for four inequalities: three defining $P$ (constraints \cref{ex:c2,ex:c3,ex:c4}), and the disjunction-defining inequality $-x_1 \ge 0$.
Note that $P$ is simple, but $P^1$ is not.

To construct the cobasis $N^1$, such that the inequality is valid for the associated basis cone $C^1$,
we must select three linearly independent constraints among those that are tight at $p^1$.
One of the inequalities must be $-x_1 \ge 0$, as otherwise we have not imposed the disjunction at all (but we also know the cut is not valid for $P$).
It can be verified that the only valid choice for this example is $N^1$ containing the indices for \cref{ex:c3}, \cref{ex:c4}, and the disjunctive inequality $-x_1 \ge 0$.
\exqed
\end{example}

\section{Computational Experiments}
\label{sec:computation}

We implement monoidal strengthening for simple VPCs, building on the code used by \citet{BalKaz22+_vpc-arxiv} from \url{https://github.com/akazachk/vpc}.
Our goal for the computational study is to measure the effect of monoidal strengthening on the \emph{percent integrality gap closed} by VPCs, compared to unstrengthened VPCs and GMICs,
and evaluated across different disjunction sizes.

The code is run on HiPerGator, a shared cluster through Research Computing at the University of Florida.
The computational setup is nearly identical to the one described in \citet[Section~5 and Appendix~C]{BalKaz22+_vpc-arxiv}.
We select instances from the union of the MIPLIB~\cite{MIPLIB,MIPLIB3,MIPLIB2003,MIPLIB2010,MIPLIB2017}, CORAL~\cite{CORAL}, and NEOS
sets, restricted to those with at most 5,000 rows and columns and based on other criteria given in 
\cite[Appendix~C]{BalKaz22+_vpc-arxiv}.
This yields 332 instances suitable for gap closed comparisons.
However, we only report on 274 of these 332 instances, due to memory resource constraints on the cluster.
Despite this reduced dataset, we can identify recurring patterns in how monoidal strengthening affects instances.
Instances are presolved with Gurobi~\cite{Gurobi}, but cut generation is done via the \Cpp{} interface to COIN-OR~\cite{COIN-OR}, 
using Clp~\cite{Clp} for solving linear programs and Cbc~\cite{Cbc} for constructing disjunctions based on partial branch-and-bound trees.
We test six different disjunction sizes, stopping branching when the number of leaf nodes (disjunctive terms) is $2^\ell$ for $\ell \in [6]$.
Thus, we report results with monoidal strengthening of disjunctive cuts from up to 64-term disjunctions,
though only one disjunction is used at a time.
One GMIC is generated per fractional integer variable at an optimal solution to the linear programming relaxation,
and the number of GMICs is also used as the limit for the number of VPCs we generate for that instance per fixed choice of disjunction.
One round of cuts is used for both procedures.
GMICs are generated through \texttt{CglGMI}~\cite{Cgl},
while the VPC generation procedure is identical to that of \citet{BalKaz22+_vpc-arxiv},
with strengthening applied afterwards.

While \cref{lem:A^t-invertible} enables us to calculate the values of the Farkas multipliers via the rays of each relaxation $P^t$,
and these values are readily available based on how we built the PRLP, we do not avail of this connection.
Instead, we calculate $v^t = \bar{\alpha}^\T (A^t)^{-1}$.
This approach is still more direct than solving a feasibility version of \cref{CGLP-fixed-beta} with $\bar{\alpha}$ fixed.
We opt for numerical safety for this exploratory investigation, so we use the Eigen library~\cite{Eigen3} to recompute the inverse of $A^t$ rather than reading from the Cbc / Clp internal basis inverse for each term.

We report the average percent integrality gap closed by VPCs and GMICs in \cref{tab:depth}.
The first six data rows contain the results for each fixed disjunction size.
The penultimate data row, labeled ``Best'', uses the highest gap closed per instance across all disjunctions.
The last data row, labeled ``Wins'', reports the number of instances for which the ``Best'' gap closed is at least $10^{-3}$ higher than the gap closed by GMICs.
In the columns, we refer to
GMICs by ``G'',
unstrengthened VPCs by ``V'',
strengthened VPCs by ``V\textsuperscript{+}''.
The columns ``G+V'' and ``G+V\textsuperscript{+}'' refer to GMICs applied together with VPCs.
There are two sets of instances: ``All'' reports on all 274 instances, 
while ``\goodVPCSet'' reports on the 97 instances for which unstrengthened VPCs alone close at least 10\% of the integrality gap for the ``Best'' values.

In terms of overall gap closed, despite the monoidal strengthening procedure,
as reported by \citet{BalKaz22+_vpc-arxiv}, VPCs alone do not outperform GMICs for the ``All'' set,
but using VPCs and GMICs together provides around 40\% improvement in gap closed relative to GMICs alone.
While adding VPCs with GMICs might double the number of cuts,
one round of VPCs continues to close substantial more gap even after multiple rounds of solver-default cuts~\cite{BalKaz22+_vpc-arxiv}.
Hence, VPCs tighten the relaxation in different regions relative to GMICs.
This is also highlighted by the ``\goodVPCSet'' set, which are instances for which VPCs have strong performance;
for this set, GMICs are relatively weaker, with the best VPCs per instance (used alone) providing a 75\% improvement in average percent gap closed over GMICs alone.
We also see this in the ``Wins'' row: for the ``\goodVPCSet'' set, VPCs alone outperform GMICs for 73 of the 97 instances in the set.

{
\sisetup{
    table-alignment-mode = format,
    table-number-alignment = center,
    table-format = 2.2,
}
\begin{table}[t]
\renewcommand{\tabcolsep}{.45em}
\centering
\caption{
            Average percent gap closed by VPCs and GMICs according to the number of leaf nodes used to construct the partial branch-and-bound tree.
            ``Best'' refers to the maximum gap closed per instance across all partial tree sizes.
        }
\label{tab:depth}
\begin{tabular}{@{}
            l
            @{\hskip 1em}
            *{5}{S[table-auto-round]}
            *{5}{S[table-auto-round]}
        @{}}
\toprule
& \multicolumn{5}{c}{All}
& \multicolumn{5}{c}{\goodVPCSet}
\\
\cmidrule(r){2-6} \cmidrule(l){7-11}
{} 
& {G} & {V} & {V\textsuperscript{+}} & {G+V} & {G+V\textsuperscript{+}} 
& {G} & {V} & {V\textsuperscript{+}} & {G+V} & {G+V\textsuperscript{+}}
\\
\midrule

2 leaves  & 17.21305672 & 2.283558442 & 3.246264847 & 17.95356823 & 18.17716805 & 16.29044662 & 5.336366979 & 6.47426168  & 18.13170743 & 18.58662604 \\
4 leaves  & 17.21305672 & 3.34594685  & 3.722825872 & 18.36777648 & 18.54010478 & 16.29044662 & 7.80978766  & 8.345640969 & 19.13856557 & 19.48467066 \\
8 leaves  & 17.21305672 & 4.510602274 & 4.759439365 & 18.98300736 & 19.14755959 & 16.29044662 & 10.83618603 & 11.16369328 & 20.66217997 & 20.9051507  \\
16 leaves & 17.21305672 & 6.414229697 & 6.567021814 & 20.54136884 & 20.6742689  & 16.29044662 & 15.81121835 & 16.05036938 & 24.86070985 & 25.03628466 \\
32 leaves & 17.21305672 & 8.777608011 & 8.97137408  & 22.30956192 & 22.47582575 & 16.29044662 & 21.82063398 & 22.28386503 & 29.58939513 & 29.9739158  \\
64 leaves & 17.21305672 & 10.45577873 & 10.57470127 & 23.719387   & 23.83269303 & 16.29044662 & 25.58778496 & 25.84561465 & 32.89875982 & 33.13913263 \\
\midrule
Best      & 17.21305672 & 11.93077401 & 12.57326639 & 24.6668168  & 24.88933653 & 16.29044662 & 29.26134773 & 29.52557606 & 35.26854676 & 35.58975427
\\
Wins      &             & {103}	      & {104}	    & {185}       &	{190}       &             & {73}        &	{73}        & {94}        &	{94}
\\
\bottomrule
\end{tabular}
\end{table}
}

Next, we summarize observations about the effect of monoidal strengthening.
We start with the first data row, in which VPCs are derived from one split disjunction per instance.
For the set ``All'', monoidal strengthening affects the gap closed by VPCs for 87 instances and increases the average gap closed by VPCs by \textasciitilde 1\% from 2.28\% to 3.25\%, a 40\% relative improvement.
For the set ``\goodVPCSet'', the corresponding relative improvement is 20\%.

Although the two-term case is encouraging, and a similar relative improvement in gap closed would be substantial for larger disjunctions, 
this unfortunately does not materialize.
From \cref{tab:depth}, we see that as the disjunction size increases, the contribution of monoidal strengthening tends to further diminish, with an absolute improvement in gap closed of only 0.1\% for VPCs from a 64-term disjunction.
We will discuss a potential cause for this in the next section.

We now compare the columns ``G+V\textsuperscript{+}'' to ``G+V''.
On the set ``All'', even for split disjunctions, the effect of strengthening is minimal when VPCs are combined with GMICs, with strengthening only yielding an additional $0.23\%$ in percent gap closed, preserving around 23\% of the improvement between ``V\textsuperscript{+}'' and ``V''.
For larger disjunctions, while the absolute increase in gap closed by strengthened VPCs is small,
over 80\% of that improvement is preserved when adding GMICs together with VPCs.

A closer examination of the results supports the hypothesis
that monoidal strengthening is a key factor enabling GMICs to close more gap than VPCs.
We sort the instances by the increase in gap closed by strengthened VPCs compared to unstrengthened ones, using the best gap closed across all disjunction sizes, per column.
\cref{tab:top5} shows the top ten instances, sorted by the last column, which calculates the difference between ``V\textsuperscript{+}'' and ``V''.
The table includes the instance \instance{10teams} discussed earlier, as well as six other instances for which unstrengthened VPCs close at most 5\% of the gap.
We see that monoidal strengthening of VPCs bridges a large portion of the difference with GMICs for these instances.
For \instance{neos-1281048}, the situation is reversed: 121 GMICs close no gap while 29 unstrengthened VPCs close 17\% of the gap, which is further improved to 29\% after strengthening.
From this table, we also observe the phenomenon that the value in column ``G+V'' is typically either entirely due to GMICs or to VPCs, but which cuts are more important varies by instance.
The situation remains similar for the column ``G+V\textsuperscript{+}'', though now we find several cases (\instance{f2gap401600}, \instance{p0548}, \instance{mkc}) in which the two cut families add to each other.

{
\sisetup{
    table-alignment-mode = format,
    table-number-alignment = center,
    table-format = 3.2,
}
\begin{table}[t]
\renewcommand{\tabcolsep}{.45em}
\centering
\caption{
            Percent gap closed for instances where strengthening VPCs works best.
        }
\label{tab:top5}
\begin{tabular}{@{}
            l
            *{1}{S[table-auto-round,table-format=3.2]}
            *{1}{S[table-auto-round,table-format=2.2]}
            *{4}{S[table-auto-round,table-format=3.2]}
        @{}}
\toprule
{Instance} & {G} & {V} & {V\textsuperscript{+}} & {G+V} & {G+V\textsuperscript{+}} & {V\textsuperscript{+}$-$V}
\\
\midrule
\instance{10teams}            & 100       & 0         & 100       & 100       & 100       & 100       \\
\instance{neos-1281048}       & 0         & 17.08645  & 29.356848 & 17.08645  & 29.356848 & 12.270398 \\
\instance{neos-1599274}       & 34.64875  & 0         & 11.19238  & 34.64875  & 34.64875  & 11.19238  \\
\instance{f2gap401600}        & 62.966944 & 2.532368  & 11.335229 & 63.309065 & 71.772235 & 8.802861  \\
\instance{prod2}              & 2.30907   & 27.603289 & 35.897319 & 27.63088  & 35.905017 & 8.29403   \\
\instance{neos-942830}        & 6.25      & 0         & 6.25      & 6.25      & 6.25      & 6.25      \\
\instance{p0548}              & 48.621041 & 3.278173  & 9.030664  & 49.028759 & 55.105255 & 5.752491  \\
\instance{mkc}                & 6.079731  & 2.600544  & 6.56202   & 6.347367  & 9.61108   & 3.961476  \\
\instance{f2gap201600}        & 60.271212 & 8.575327  & 12.131208 & 60.271212 & 60.271212 & 3.555881  \\
\instance{neos-4333596-skien} & 20.835984 & 7.048222  & 9.830355  & 20.835984 & 20.852802 & 2.782133 
\\
\bottomrule
\end{tabular}
\end{table}
}

While running time is not our focus, and the shared computing environment makes wall clock times unreliable, 
\cref{tab:running-time} provides 
the average number of seconds for a single run of each instance, including generating then strengthening VPCs.
On average, cut generation takes, in total, from less than a second for two-term disjunctions to 50 seconds for 16-term disjunctions, 150 seconds for 32-term disjunctions, and nearly 9 minutes for 64-term disjunctions.
The time per cut, on average, is less than 0.1 seconds for two-term disjunctions, ranging up to 9 seconds for 32 terms and over 30 seconds for 64 terms.

{
\renewcommand{\textfloatsep}{-1em}
\renewcommand{\arraystretch}{0.65}
\sisetup{
    table-alignment-mode = format,
    table-number-alignment = center,
    table-format = 3.2,
}
\begin{table}[b]
\renewcommand{\tabcolsep}{.45em}
\centering
\caption{
            Average time (seconds) to generate the cuts in column {V\textsuperscript{+}} of \cref{tab:depth}.
        }
\label{tab:running-time}
\begin{tabular}{@{}
            ll
            *{2}{S[table-auto-round,table-format=1.2]}
            *{2}{S[table-auto-round,table-format=2.2]}
            *{3}{S[table-auto-round,table-format=3.2]}
        @{}}
\toprule
{Statistic} & {Set} & {2 leaves} & {4 leaves} & {8 leaves} & {16 leaves} & {32 leaves} & {64 leaves}
\\
\midrule
\multirow{2}*{Cut time (s)} & {All} & 0.755428571	& 6.38952381 & 15.3302381 & 49.89590476 & 149.835381 & 525.7790476
\\
& {\goodVPCSet} & 0.917010309	& 9.314536082   & 21.05742268   & 130.4549485   &	273.5126804   &	521.9860825
\\
\midrule
\multirow{2}*{Time/cut (s)} & {All} & 0.078746636	& 0.394118765   & 0.973595199	  & 2.646753356	  & 8.996324887	  & 30.53973419
\\
& {\goodVPCSet} & 0.066793661   &	0.346145955   &	0.785482158	  & 2.460707448	& 7.752531074	& 20.19377575
\\
\bottomrule
\end{tabular}
\end{table}
}

\section{Choosing a Relaxation Amenable to Strengthening}
\label{sec:example:large-disjunction}

In this section, we examine a potential cause of the diminishing effect of monoidal strengthening with larger disjunctions.
From \cref{thm:monoidal-strengthening-structural-space}, given an initial cut $\alpha^\T x \ge \beta$, 
we can strengthen coefficient $\alpha_k$, $k \in \intvars$, to
    \begin{equation*}
      \widetilde{\alpha}_k 
        = \alpha_k + \inf_{\monoid \in \M} \max_{t \in \disjTermsIndexSet} \left\{ -\hat{u}^t_k + u^t_0 \Deltat \monoid_t \right\},
    \end{equation*}
where
    $\hat{u}^t_k = \alpha_k - (u^t \mxcol{A}{k} + u^t_0 \mxcol{D^t}{k})$
is the slack on the CGLP constraint
    $
        \alpha_k \ge u^t \mxcol{A}{k} + u^t_0 \mxcol{D^t}{k}.
    $
Equivalently, $\hat{u}^t_k$ is the Farkas multiplier for the nonnegativity constraint $x_k \ge 0$.
The next lemma restates the (known) reason that a \emph{nonbasic} integral variable $k$ is required for monoidal strengthening.

\begin{lemma}
    If $\hat{u}^t_k = 0$, then $\widetilde{\alpha}_k = \alpha_k$.
\end{lemma}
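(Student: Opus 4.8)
The goal is to show that the bracketed infimum in \cref{thm:monoidal-strengthening-structural-space} equals zero under the hypothesis (understood as $\hat{u}^t_k = 0$ for every $t \in \disjTermsIndexSet$), so that $\widetilde{\alpha}_k = \alpha_k$. I would split this into the two inequalities. The direction $\widetilde{\alpha}_k \le \alpha_k$ in fact holds unconditionally: evaluating the formula at $\monoid = \vec{0} \in \monoidSet$ gives $\max_{t \in \disjTermsIndexSet}\{-\hat{u}^t_k\} = -\min_{t \in \disjTermsIndexSet} \hat{u}^t_k \le 0$, since every $\hat{u}^t_k = \alpha_k - \alpha^t_k$ is nonnegative by definition of $\alpha_k$ as $\max_{t \in \disjTermsIndexSet} \alpha^t_k$. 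Under the hypothesis, this same choice $\monoid = \vec{0}$ makes the quantity equal to $0$ exactly, so the infimum is at most $0$.

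The content is the reverse inequality. With $\hat{u}^t_k = 0$ for all $t$, the expression to be minimized collapses to $\max_{t \in \disjTermsIndexSet}\{u^t_0 \Deltat \monoid_t\}$. The key observation is that each scalar $u^t_0 \Deltat$ is nonnegative: $u^t_0 \ge 0$ by construction, and $\Deltat = D^t_0 - \ell^t \ge 0$, which is the standing assumption on $\ell^t$ (and can anyway be arranged without loss of generality, since replacing $\ell^t$ by its componentwise minimum with $D^t_0$ only makes the bound $D^t x \ge \ell^t$ on $\PI$ weaker, hence still valid). Now take any $\monoid \in \monoidSet$. Since $\sum_{t \in \disjTermsIndexSet} \monoid_t \ge 0$, the coordinates of $\monoid$ cannot all be negative, so some $\monoid_{t^\star} \ge 0$; then the $t^\star$ entry satisfies $u^{t^\star}_0 \Delta^{t^\star} \monoid_{t^\star} \ge 0$, and therefore the maximum over $t$ is $\ge 0$. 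As $\monoid$ was arbitrary, the infimum is $\ge 0$. Combining the two bounds gives that the infimum is $0$, i.e., $\widetilde{\alpha}_k = \alpha_k$.

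The one point deserving care is the sign fact $u^t_0 \Deltat \ge 0$, which is genuinely indispensable: were some $u^t_0 \Deltat$ negative, one could push that term's value down by taking $\monoid_t$ large and negative, compensating in the constraint $\sum_{t \in \disjTermsIndexSet} \monoid_t \ge 0$ with a large positive entry elsewhere (which also lowers its own term), producing a strictly negative infimum and hence actual strengthening. Given the nonnegativity of each $u^t_0 \Deltat$ and of the $\hat{u}^t_k$, the remaining steps — the test point $\monoid = \vec{0}$ and the observation that some $\monoid_{t^\star} \ge 0$ — are immediate.
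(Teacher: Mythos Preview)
Your proof is correct and follows essentially the same line as the paper's: both reduce to showing that $\inf_{\monoid \in \monoidSet} \max_{t \in \disjTermsIndexSet}\{u^t_0 \Deltat \monoid_t\} = 0$ using $u^t_0 \Deltat \ge 0$ and the monoid constraint $\sum_t \monoid_t \ge 0$, with $\monoid = \vec{0}$ as the optimizer. Your version is simply more explicit about the lower bound---spelling out that some coordinate $\monoid_{t^\star}$ must be nonnegative---whereas the paper leaves that step to the reader.
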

\begin{proof}
    In this case,
    \(
        \widetilde{\alpha}_k 
            = \alpha_k + \inf_{\monoid \in \M} \max_{t \in \disjTermsIndexSet} \left\{ u^t_0 \Deltat \monoid_t \right\}.
    \)
    Since $\sum_{t \in \disjTermsIndexSet} \monoid_t \ge 0$ for every $\monoid \in \monoidSet$,
    and $u^t_0 \Deltat \ge 0$, the optimal solution is $\monoid = \vec{0}$.
\end{proof}

In the correspondence in \cref{sec:correspondence}, we ultimately find a point $p^t \in P^t$ such that $\bar{\alpha}^\T p^t = \bar{\beta}_t$,
where $\bar{\beta}_t = \min_p \{ \bar{\alpha}^\T p \suchthat p \in \pointset^t \}$.
We then compute a basis cone at $p^t$ for which the cut is valid and use this (translated) cone to compute the values of the Farkas certificate.
However, by complementary slackness, if $p^t_k > 0$, then necessarily $\hat{u}^t_k = 0$.

Although at first this appears simultaneously unfortunate and unavoidable,
there are two potential remedies.
First, there may be dual degeneracy in the choice of $p^t$: each such point can lead to a different Farkas certificate and therefore a different strengthening.
Second, as observed by \citet{BalQua12}, ``sometimes weaking a disjunction helps the strengthening''.
Though in that context, the weakening involves adding terms to the disjunction, the sentiment applies to our setting as well:
if $\bar{\beta}_t > \bar{\beta}$, then one can seek a different, potentially infeasible, basis of $Q^t$ in which more integer variables are nonbasic and $\bar{\alpha}^\T x \ge \bar{\beta}$ is still valid for the associated basis cone.

The computational results support the above intuition. %
When VPCs are generated from a split disjunction, on average, around 95\% of the generated cuts per instance have any coefficient strengthened with the monoidal technique.
This decreases to 85\% for 64-term disjunctions.
Furthermore, on average among VPCs to which strengthening has been applied, 20\% of the cut coefficients are strengthened for split disjunctions, while this value steadily decreases as disjunction size increases,
so among the analogous VPCs from 64-term disjunctions, only 10\% of the coefficients are strengthened.

\section{Conclusion}
\label{sec:conclusion}

We show that strengthening cuts from general disjunctions is possible without explicitly solving a higher-dimensional CGLP,
and that this strengthening can have a high impact for certain instances.
However, several challenges are also highlighted for future work.
First, the strengthening does not work well on average for larger disjunctions.
While we propose a viable explanation and remedy, it is computationally demanding and requires development.
Second, the optimal monoidal strengthening involves solving an integer program per cut; this is a relatively small and easy problem, but it nonetheless can be slow
for larger disjunctions, as suggested by \cref{tab:running-time}, which includes strengthening time.
One can reduce this load by selectively strengthening only the most promising cuts, identified by theoretical properties or good heuristics,
or to forego optimality in the strengthened cut coefficients.
Our computational results indicate that VPCs and GMICs seem to have complementary affects; understanding this better is an opportunity to more widely adopt disjunctive cuts.

\ifspringer
    \bibliographystyle{splncs04}
\else
    \bibliographystyle{plainnat}
\fi
\bibliography{2023-02-28_akazachk}

\newpage

\appendix

\end{document}